\def\MODE{3}
\newmdtheoremenv[innertopmargin=0,innerbottommargin=5,%
innerleftmargin=5,innerrightmargin=5]{fthm}[thm]{Theorem}
\let\oldbibliography\thebibliography
\renewcommand{\thebibliography}[1]{\oldbibliography{#1}
	\setlength{\itemsep}{1pt}} 
\else\usepackage{generic}\fi
\newcommand{\ip}[2]{\left\langle #1 ,\, #2 \right\rangle}
\newcommand{\dom}{\mathrm{dom}}
\newcommand{\CPhi}{\mathcal{C}_\Phi}
\newcommand{\RV}{\mathscr{R}(\V)}
\newcommand{\LV}{\mathscr{L}(\V)}
\newcommand{\eemph}[1]{\textit{\textbf{#1}}}
\newcommand{\LLtwoe}{L_{2\textup{e}}}
\begin{document}
	
	\title{Generalized Necessary and Sufficient Robust Boundedness Results for Feedback Systems}

	\if\MODE3
	\date{}
	\author{Saman Cyrus\footnote{Department of Electrical and Computer Engineering, University of Wisconsin--Madison. \texttt{cyrus2@wisc.edu}} \and Laurent Lessard\footnote{Department of Mechanical and Industrial Engineering, Northeastern University. \texttt{l.lessard@northeastern.edu}}}
	\else
	\author{Saman Cyrus, \IEEEmembership{Member, IEEE}, Laurent Lessard, \IEEEmembership{Senior Member, IEEE}
	\thanks{This material is based upon work supported by the National Science Foundation under Grant No. 1710892 and 2136317.}
	\thanks{S.~Cyrus is presently at Johnson Controls, Inc. The research related to this work was performed while he was with the Department of Electrical and Computer Engineering at the University of Wisconsin--Madison, Madison, WI 53706. (email: cyrus2@wisc.edu)}
	\thanks{L.~Lessard is with the Department of Mechanical and Industrial Engineering at Northeastern University, Boston, MA 02115. (email: l.lessard@northeastern.edu)}}
	\fi
	
	\maketitle
\begin{abstract}
Classical sufficient conditions for ensuring the robust stability of a dynamical system in feedback with a nonlinearity include passivity, small gain, circle, and conicity theorems. We present a generalized version of these results for arbitrary semi-inner product spaces. Our result is purely algebraic, and holds even when the conventional  discrete or continuous-time causal dynamical systems are replaced by general nonlinear relations, where there need not exist a notion of time. Our result clarifies when the sufficient conditions for robust stability are also necessary, and explains why stronger assumptions such as linearity and time-invariance are typically needed to prove necessity in the conventional dynamical systems setting.
\end{abstract}

\if\MODE3\else
\begin{IEEEkeywords}	
	Conic sectors,
	Input-output stability,
	Necessary and sufficient conditions,
	Nonlinear systems,
	Robust control,
	Stability analysis.
\end{IEEEkeywords}
\fi

\section{Introduction}

\if\MODE3
Robust
\else
\IEEEPARstart{R}{obust}
\fi
stability of interconnected systems has been a topic of interest for over 75 years, dating back to the seminal works of Lur'e \cite{lur1944theory}, Zames~\cite{Zames_inputoutput1966_part1,Zames_inputoutput1966_part2}, and Willems~\cite{willems1972dissipative}. The standard input-output setup is illustrated in Fig.~\ref{fig:FeedbackDiagram}, where systems $G$ and $\Phi$ are connected in feedback, and we seek conditions under which we can ensure the stability of the closed-loop map $(u_1,u_2)\to(y_1,y_2)$.

Robust stability results typically assume a known $G$ is interconnected with some unknown, uncertain, or otherwise troublesome $\Phi \in \mathcal{C}_\Phi$, where $\mathcal{C}_\Phi$ is known. Then, if certain conditions on $G$ and $\mathcal{C}_\Phi$ are met, we can ensure that the interconnection of Fig.~\ref{fig:FeedbackDiagram} is stable.

There are many robust stability results in the literature: passivity theory, the small-gain theorem, the circle criterion, graph separation, conic sector theorems, multiplier theory, dissipativity theory, and integral quadratic constraints.\footnote{Detailed references can be found in Section~\ref{sec:related} and Table~\ref{Tab:LiteratureReview}.}
\begin{figure}[t]
	\centering
	\begin{minipage}[T]{0.55\linewidth}
		\centering
		\vspace{0.5em}
		\includegraphics{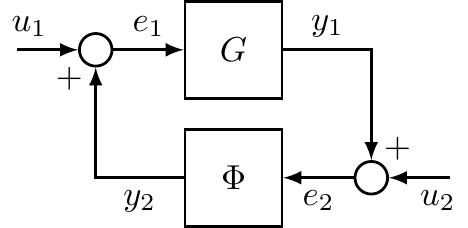}
	\end{minipage}%
	\begin{minipage}[T]{0.45\linewidth}
		\vspace{-1em}
		\begin{subequations}\label{interconnect}
			\begin{align}
			e_1 &= u_1 + y_2 \label{1a}\\
			y_2 &= \Phi e_2 \label{1b}\\
			e_2 &= u_2 + y_1 \label{1c}\\ 
			y_1 &= G e_1 \label{1d}
			\end{align}
		\end{subequations}
	\end{minipage}
	\caption{Feedback interconnection of systems $G$ and $\Phi$.\label{fig:FeedbackDiagram}}
	\vspace{-4mm}
\end{figure}

The reason for the wide variety of robust stability results is that different assumptions can be made about $G$ and $\mathcal{C}_\Phi$.
For example, $G$ and $\Phi$ are typically causal operators on an extended space of time-domain signals such as $\LLtwoe$ or $\ltwoe$. Additionally, $G$ or $\Phi$ may be restricted to be linear, time-invariant, or static. Finally, some results are stated as sufficient conditions while others are both necessary and sufficient.

In spite of their diversity, robust stability results are typically proven using the same elementary properties of inner product spaces. A natural question to ask, which forms the basis of our present work, is whether the multitude of existing results can be viewed as consequences of a purely algebraic result. We answer in the affirmative.
\newpage

\paragraph{Main contribution:}
In Section~\ref{sec2_semi_inner_product_spaces}, we present Theorem~\ref{thm:meta}, a robust boundedness result involving interconnected \emph{relations} over a general \emph{semi-inner product space}. Theorem~\ref{thm:meta} distills the vast literature on robust stability into a simple and purely algebraic result.

In Section~\ref{sec3}, we specialize Theorem~\ref{thm:meta} to $\LLtwoe$ and $\ltwoe$ spaces, which reveals the connections between the algebraic version of the result and notions of \emph{well-posedness}, \emph{causality}, and \emph{stability}. We also explain why stronger assumptions, such as linearity and time-invariance of $G$, are often required in order to achieve both sufficiency and necessity.

\if\MODE3\begin{sidewaystable}\else\begin{table*}[th]\fi
	\centering
	\begin{threeparttable}[b]
		\caption{Literature Review of robust stability results involving two interconnected systems (see Fig.~\ref{fig:FeedbackDiagram}). The first group of rows are sufficient-only results ($\implies$). The second group  are necessary-and-sufficient ($\iff$). For constraints on $G$ and $\Phi\in\CPhi$ we denote linear (L), nonlinear (N), time-varying (TV), time-invariant (TI), static (S), and fading-memory (F). For example, ``LTI'' indicates linear and time-invariant. Symbols $\LLtwoe$ and $\ltwoe$ denote extended spaces ($\Ltwoe$ for both)
		and s.i.p.s. denotes a semi-inner product space.
		In extended spaces, $G$ and $\Phi$ are constrained to be causal. There is no such requirement for s.i.p.s., since there need not exist a notion of time. The final column indicates whether the converse proof direction ($\impliedby$), if applicable, explicitly constructs a worst-case $\Phi$ when the conditions on $G$ are violated. The present work is restricted to static constraints.
		}\label{Tab:LiteratureReview}
	\def\arraystretch{1.2}
	\begin{tabular}{lllccccc}
		\toprule
		\textbf{Reference} & \textbf{Constraint} & \textbf{Result Type}&	\textbf{Space}	& $G$ & $\CPhi$ &\textbf{Direction}  & \textbf{Constructive}\\
		\midrule
		\textbf{Vidyasagar}\cite[\S 6.6.(1,58)]{vidyasagar2002nonlinear} & static & passivity \& small gain	& $\Ltwoe$	&	 N 	& N 		& $\implies$ 	 		& 			\\
		\textbf{Zames}\cite[Thm. 1--3]{Zames_inputoutput1966_part1} 	& static & conic	&$\Ltwoe$ &	N 	& N 		&	$\implies$	 & 	\\
		\textbf{Bridgeman \& Forbes}\cite{Bridgeman2018Comparative} & static & conic				&$\Ltwoe$	& N 		& N 		& $\implies$ 	 &  		\\
		\textbf{Zames}\cite[\S 3--4]{Zames_inputoutput1966_part2} 	& static & circle \& multipliers &	$\LLtwoe$	& LTI 		& NS & $\implies$ 	  		&  			\\
		\textbf{Desoer \& Vidyasagar}\cite{desoer2009feedback} & dynamic & multipliers &$\Ltwoe$ & N &N &$\implies$  &  \\
		\textbf{Teel et al.}\cite{teel1996input} & static & graph separation & $\Ltwoe$ 		& N 		& N 		& $\implies$ 		& 			\\
		\textbf{Willems}\cite{willems1972dissipative} & dynamic & dissipativity & $\Ltwoe$& N& N&$\implies$ & \\
		\textbf{Pfifer \& Seiler}\cite{pfifer2015integral} & dynamic & dissipativity & $\LLtwoe$& LTI &N &$\implies$  &  \\
		\textbf{Megretski \& Rantzer}\cite{megretski1997system} & dynamic & IQC					& $\LLtwoe$	& LTI 		& NS		& $\implies$\tnote{$\dagger$}		 		& 			\\
		%
		\midrule
		\textbf{Vidyasagar}\cite[\S 6.6.(112,126)]{vidyasagar2002nonlinear}	& static & small gain \& circle	&	$\LLtwoe$ &	LTI		& N 		&	$\iff$	 	& Yes			\\
		\textbf{Khong \& van der Schaft}\cite[Thm. 3]{khong2018converse}	& static & passivity \& small gain		&	$\LLtwoe$		&	LTI		& LTV 	&	$\iff$		& No			\\
		\textbf{Zhou et al.}\cite[Thm.~9.1]{zhou1996robust}	& static & small gain			&	$\LLtwoe$			&	LTI		& LTI 		& $\iff$ 		& Yes			\\
		\textbf{Khong \& Kao}\cite[Thm.~1]{khong2021}	& dynamic & IQC		&	$\LLtwoe$				&	LTI		& LTI 		& $\iff$ 		& Yes			\\
		\textbf{Shamma}\cite[Thm. 3.2]{shamma1991necessity}	& static & small gain		&	$\ltwoe$	& 	NF	& NF 	&	$\iff$		& Yes			\\
		\textbf{Cyrus \& Lessard}\cite{cyrus2019unified} 	& static &	conic			& s.i.p.s.		& L & N & $\iff$	 & No 	\\
		\textbf{Present work} 	& static & conic			& s.i.p.s.			& N & N & $\iff$	 & Yes 	\\
		\bottomrule
	\end{tabular}
	\begin{tablenotes}
		\item [$\dagger$] The authors in~\cite{megretski1997system} mention that their sufficient condition for robust stability is also necessary in the sense that a result in the spirit of Lemma~\ref{lem:WorstCaseSignals} holds via a suitable application of the S-lemma~\cite{megretski_treil}.
		\end{tablenotes}
	\end{threeparttable}
\if\MODE3\end{sidewaystable}\else\end{table*}\fi

\subsection{Related work}\label{sec:related}

In Table~\ref{Tab:LiteratureReview}, we provide a summary of existing robust stability results.
In the ``Direction'' column, we distinguish between \emph{sufficient-only} results ($\implies$) and \emph{necessary-and-sufficient} results ($\iff$).

\paragraph{Sufficient results.}

Classical sufficient results include the passivity, small-gain, and circle theorems. These results are mutually related via a loop-shifting transformation~\cite{anderson1972small}, and were generalized to \emph{conic sectors}~\cite{Zames_inputoutput1966_part1,Zames_inputoutput1966_part2,bridgeman2016extended}. 

Beyond conic sector constraints, graph separation~\cite{teel1996input,safonov1980stability} allows for nonlinear constraints, while multiplier theory~\cite{desoer2009feedback}, dissipativity~\cite{willems1972dissipative}, and integral quadratic constraints (IQCs)~\cite{megretski1997system,pfifer2015integral,veenman2016robust} allow for dynamic or time-varying constraints. There have also been several works discussing how these various frameworks are related~\cite{carrasco2019conditions,fu2005integral,seiler2014stability}. In Table~\ref{Tab:LiteratureReview}, we distinguish between \emph{static} constraints (the focus of the present work), and more general \emph{dynamic} constraints, which include multipliers, dissipativity theory, and IQCs.

\paragraph{Necessary and sufficient results.}
When $\Phi$ is assumed to be memoryless (but still possibly time-varying), the classical passivity, small-gain, and circle theorems are only sufficient for robust stability \cite{megretski1993necessary,brockett1966status}.

Finding a robust stability condition that is both sufficient and necessary requires stronger assumptions. The set $\CPhi$ must be broadened to allow dynamic nonlinearities, and we must typically assume that $G$ is linear and time-invariant (LTI).
For example, the passivity and small gain results of Vidyasagar~\cite[\S 6.6(112,126)]{vidyasagar2002nonlinear} and Khong et al.~\cite[Thm.~3]{khong2018converse} assume $G$ is LTI. The small-gain result of Zhou et al.~\cite[Thm.~9.1]{zhou1996robust} and the  converse IQC result of Khong et al.~\cite{khong2021} make the stronger assumption that \emph{both} $G$ and $\Phi$ are LTI. Finally, Shamma's small-gain result~\cite[Thm.~3.2]{shamma1991necessity} holds when both $G$ and $\Phi$ are nonlinear and time-varying, but requires a \emph{fading memory}assumption, which allows the system response to be approximated by that of a linear system.

\subsection{Notation}\label{sec:notation}

\paragraph{Preliminaries.}
The set $\F$ refers to the field of real or complex numbers.
The complex conjugate of $x\in \F$ is $\bar x$ and the conjugate transpose of $X\in \F^{m\times n}$ is $X^*$. We use $\preceq$, $\prec$, $\succ$, $\succeq$ to denote the (semi)definite partial ordering in $\F^{n\times n}$.

\paragraph{Semi-inner products.}
A \emph{semi-inner product space} is a vector space $\V$ over a field $\F$ equipped with a semi-inner product%
\footnote{We use the convention that a semi-inner product is linear in its second argument, so $\ip{x}{ay+bz} = a\ip{x}{y} + b\ip{x}{z}$ for all $x,y,z\in \V$ and $a,b\in \F$. Also, $\ip{x}{y} = \overline{\ip{y}{x}}$.}
$\ip{\cdot}{\cdot}$, which is an inner product whose associated norm is a seminorm. In other words, $\norm{x} \defeq \sqrt{\ip{x}{x}} \geq 0$ for all $x\in \V$, but $\norm{x}=0$ does not imply that $x=0$.

\paragraph{Relations.} A \emph{relation} $R$ on $\V$ is a subset of the product space $R \subseteq \V\times \V$. We write $\RV$ to denote the set of all relations on $\V$. The \eemph{domain} of $R$ is $\dom(R) \defeq \set{x\in \V}{(x,y)\in R \text{ for some } y\in\V}$. For any $x\in\dom(R)$, we write $Rx$ to denote any $y\in\V$ such that $(x,y)\in R$. 

We define $\V^2$ as augmented vectors $\left(\begin{smallmatrix}u_1\\u_2\end{smallmatrix}\right)$ where $u_1,u_2\in\V$. We overload matrix multiplication in~$\V^2$; for any $\xi, \zeta\in\V^2$ and any matrix $N\in\F^{2\times 2}$,
\[
N\xi = \bmat{N_{11} & N_{12} \\ N_{21} & N_{22} }\bmat{\xi_1 \\ \xi_2} \defeq \bmat{ N_{11} \xi_1 + N_{12} \xi_2 \\ N_{21} \xi_1 + N_{22} \xi_2} \in \V^2.
\]
Likewise, inner products in $\V^2$ have the interpretation
\[
\ip{\xi}{\zeta} = \ip{\bmat{\xi_1\\ \xi_2}}{\bmat{\zeta_1\\\zeta_2}} \defeq \ip{\xi_1}{\zeta_1} + \ip{\xi_2}{\zeta_2}.
\]
We omit subscripts when referring to many of the $u_i$, $y_i$, $e_i$ from Fig.~\ref{fig:FeedbackDiagram} at once. For example, $(u,y,e)$ is shorthand for $(u_1,u_2,y_1,y_2,e_1,e_2)$. We also define the following relations, which characterize pairs of consistent signals.
\begin{subequations}\label{eq:relations}
	\begin{align*}
		R_{uy} &\defeq \set{(u,y)\in \V^2\times\V^2 }{\eqref{interconnect} \text{ holds for some }e\in\V^2 },\\
		R_{ue} &\defeq \set{(u,e)\in \V^2\times\V^2 }{\eqref{interconnect} \text{ holds for some }y\in\V^2}.
	\end{align*}
\end{subequations}

\newpage
\section{Results for semi-inner product spaces}
\label{sec2_semi_inner_product_spaces}

Our main result is a robust boundedness theorem defined over a general semi-inner product space.
We consider the setup of Fig.~\ref{fig:FeedbackDiagram}, where $G \in \RV$ and $\Phi \in \CPhi \subseteq \RV$ are (possibly nonlinear) relations.
\begin{fthm}\label{thm:meta}
	Let $\V$ be a semi-inner product space and let $M=M^* \in \F^{2\times 2}$. Suppose $G \in \RV$ and $\CPhi \subseteq \RV$. Consider the three following statements.
	\begin{enumerate}[(i)]
		\itemsep=2mm
		\item\label{thm_it_i} There exists $N=N^* \in \F^{2\times 2}$ satisfying $M+N\prec 0$ such that the following property of $G$ holds.
		\begin{equation}\label{G}
			\ip{ \bmat{G\xi\\ \xi} }{ N \bmat{G\xi\\ \xi} } \ge 0\quad\text{for all } \xi\in\dom(G).
		\end{equation}

		\item\label{thm_it_ii} There exists $\gamma>0$ such that for all $(u,y,e)$, if 
		\begin{equation}\label{M2}
			\ip{ \bmat{e_2\\y_2} }{ M \bmat{e_2\\y_2} } \ge 0
		\end{equation}
		and \eqref{1a}, \eqref{1c}, \eqref{1d} are satisfied, then $\norm{y} \le \gamma \norm{u}$.

		\item\label{thm_it_iii} There exists $\gamma>0$ such that for all $\Phi\in \CPhi$, if
		\begin{equation}\label{Phi}
			\ip{ \bmat{\xi\\\Phi \xi} }{ M \bmat{\xi\\\Phi \xi} } \ge 0
			\quad\text{for all }
			\xi\in\dom(\Phi),
		\end{equation}
		then for all $(u,y)\in R_{uy}$, the following bound holds
		\begin{equation}\label{norm}
			\norm{y} \le \gamma \norm{u}.
		\end{equation}
	\end{enumerate}
	The following equivalences hold:
	\begin{itemize}
		\item Graph separation: $\text{(\ref{thm_it_i})}\iff\text{(\ref{thm_it_ii})}$.
		\item Interpolation: $\text{(\ref{thm_it_ii})}\implies\text{(\ref{thm_it_iii})}$.
	\end{itemize}
\end{fthm}

A pedagogical benefit of Theorem~\ref{thm:meta} is that it splits the robustness result into a \emph{graph separation} statement that concerns $G$ and an \emph{interpolation} statement that concerns $\CPhi$.

The result $\text{(\ref{thm_it_i})}\iff\text{(\ref{thm_it_ii})}$ relates boundedness $G$ in \eqref{G} to boundedness of the closed-loop map when $\Phi$ is replaced by the inequality \eqref{M2}. This graph separation result holds for arbitrary $G$ (any nonlinear relation), and does not depend on $\Phi$ or $\CPhi$.

The result $\text{(\ref{thm_it_ii})}\implies\text{(\ref{thm_it_iii})}$ relates the inequality \eqref{Phi} satisfied by $\Phi$ to the inequality \eqref{M2} satisfied by the inputs and outputs of $\Phi$. Whether or not the converse holds depends on whether the set $\CPhi$ is rich enough to allow interpolation. In other words, given the signals $e_2$ and $y_2$ satisfying~\eqref{M2}, does there necessarily exist a $\Phi\in\CPhi$ such that $y_2 = \Phi e_2$?

Theorem~\ref{thm:meta} is \emph{sufficient} for robust boundedness because it proves (\ref{thm_it_i})$\implies$(\ref{thm_it_iii}). In Section~\ref{sec:achievingnecessity}, we show that with suitable assumptions about $G$ and $\CPhi$, we can satisfy the interpolation requirement and therefore make the result necessary as well.

Since Theorem~\ref{thm:meta} is expressed using a general semi-inner product space, it holds even when $G$ is not a dynamical system but rather a general nonlinear relation. So there need not exist a notion of time.
We make a few additional remarks.

\begin{rem}
	Equation~\eqref{norm} can be stated in terms of $(u,e)$ instead of $(u,y)$. Specifically,
	\eqref{norm} holds for all $(u,y)\in R_{uy}$ if and only if there exists some $\bar\gamma > 0$ such that $\norm{e} \le \bar\gamma\norm{u}$ holds for all $(u,e)\in R_{ue}$.
\end{rem}

\begin{rem}
	In Item (\ref{thm_it_i}), we can equivalently replace $N$ by $-M-\varepsilon I$ and modify the statement preceding~\eqref{G} to: ``There exists some $\varepsilon > 0$ such that $G$ satisfies \eqref{G}''. We chose the form with $M$ and $N$ for aesthetic reasons.
\end{rem}

\begin{rem}
	Theorem~\ref{thm:meta} can be generalized to $G \in \mathcal{R}(\V^n,\V^m)$ (the set of relations on $\V^n \times \V^m$) and $\Phi \in \CPhi \subseteq \mathcal{R}(\V^m,\V^n)$. Here, $M,N \in \F^{(m+n)\times(m+n)}$ would be block $2\times 2$ matrices. 
\end{rem}

\subsection{Proof of sufficiency for Theorem~\ref{thm:meta}} \label{sec:SufficiencyProof}

We begin by showing that (\ref{thm_it_i})$\implies$(\ref{thm_it_ii})$\implies$(\ref{thm_it_iii}).
This proof is similar to \cite[Thm.~1]{mccourt2010control}. Pick any $(u,y,e)$ such that \eqref{1a}, \eqref{1c}, \eqref{1d}, and \eqref{M2} are satisfied. 
Let $\xi=e_1$ in~\eqref{G}. Using \eqref{interconnect} to eliminate $e_1$, $e_2$, Equations~\eqref{G} and~\eqref{M2} become: $\ip{ \sbmat{y_1\\ u_1+y_2}\! }{ N \sbmat{y_1\\u_1+y_2} } \ge 0$ and
$\ip{ \sbmat{u_2+y_1\\y_2}\! }{ M \sbmat{u_2+y_1\\y_2} } \ge 0$.
Summing these two inequalities and collecting terms, we obtain
\[\ip{\bmat{y_1\\y_2}}{(M+N)\bmat{y_1 \\ y_2}}
	+2\ip{\bmat{y_1\\y_2}}{\bmat{N_{12}&M_{11}\\N_{22}&M_{21}}\bmat{u_1\\u_2}}
	+ \ip{\bmat{u_1\\u_2}}{\bmat{N_{22}&0\\0&M_{11}}\bmat{u_1\\u_2} }
	\ge 0.\]
Since $M+N \prec 0$ by assumption, There exists $\eta > 0$ such that $M + N
\preceq -\eta I$. Applying this inequality together with Cauchy--Schwarz\footnote{A proof of the Cauchy--Schwarz inequality for general semi-inner product spaces may be found in~\cite[\S 1.4]{conway}.}, we get $-\eta\norm{y}^2 + 2r \norm{y} \norm{u} + q \norm{u}^2 \ge 0$, where 
$r \defeq \normmm{ \sbmat{N_{12}&M_{11}\\N_{22}&M_{21}} }$
and
$q \defeq \normmm{ \sbmat{N_{22}&0\\0&M_{11}} }$
are standard spectral norms. Dividing by $\eta$ and completing the square, we can rewrite the last inequality as
$\bigl(\norm{y}-\tfrac{r}{\eta}\norm{u}\bigr)^2 \leq \tfrac{r^2+\eta q}{\eta^2} \norm{u}^2$, which can be rearranged to establish~(\ref{thm_it_ii}) with
$\gamma = \frac{1}{\eta}\bigl( r + \sqrt{r^2 + \eta q} \bigr)$.

To prove (\ref{thm_it_iii}), consider some $\Phi \in \CPhi$ for which~\eqref{Phi} holds. Next, pick $(u,y) \in R_{uy}$ so that there exists $(u,y,e)$ satisfying \eqref{interconnect}. In particular, \eqref{1b} holds, so setting $\xi = e_2$ in \eqref{Phi}, we obtain~\eqref{M2} and the rest of the proof is the same as above. \qedhere

\subsection{Necessity of graph separation in Theorem~\ref{thm:meta}}
\label{sec:necessity}

A popular approach for proving (\ref{thm_it_i})$\impliedby$(\ref{thm_it_ii}) is to use a lossless S-lemma as in~\cite[Thm.~3]{khong2018converse} and \cite{megretski1997system}. However, the S-lemma \cite{megretski_treil,yakubovich1997s} comes with a drawback: the set of signals $(u,y,e)$ that satisfy the loop equations~\eqref{1a}, \eqref{1c}, \eqref{1d} must be a \emph{subspace}, which requires for example that $G$ be linear. If we assume $G$ is linear, we can prove (\ref{thm_it_i})$\impliedby$(\ref{thm_it_ii}) by adapting the S-lemma for inner product spaces due to Hestenes~\cite[Thm.~7.1,~p.~354]{hestenes} and using a technique similar to that used in \cite{khong2018converse}. Details of this approach may be found in \cite{cyrus2020dissertation,cyrus2019unified}.

The linearity assumption on $G$ can be dropped entirely if we adopt a different proof approach. To this effect, we will prove the contrapositive $\lnot$(\ref{thm_it_i})$\implies$$\lnot$(\ref{thm_it_ii}) by directly constructing signals $(y,u,e)$ that violate the boundedness condition when (\ref{thm_it_i}) fails to hold. Unlike the S-lemma, this approach does not require linearity of $G$ and has the benefit of being constructive, so it produces \emph{worst-case} signals $(u,y,e)$.

\newpage
\begin{lem}[worst-case signals]\label{lem:WorstCaseSignals}
	Consider the setting of Theorem~\ref{thm:meta}. Suppose that for any $N$ satisfying $M+N \prec 0$, there exists $\xi\in\dom(G)$ such that
	$
	\ip{\sbmat{G\xi \\ \xi}}{N\sbmat{G\xi \\ \xi}} < 0
	$.
	Then, for all $\gamma > 0$, there exists $(u,y,e)$ such that:
	\begin{enumerate}
		\itemsep=1mm
		\item\label{lemit1} Equations \eqref{1a}, \eqref{1c}, and \eqref{1d} hold.
		\item\label{lemit2} $
		\ip{\bmat{\vphantom{G}e_2 \\ y_2}}{M\bmat{\vphantom{G}e_2 \\ y_2}} \geq 0$.
		\item\label{lemit3} $\norm{y} > \gamma \norm{u}$.
	\end{enumerate}
\end{lem}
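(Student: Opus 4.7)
The plan is to prove the contrapositive by an explicit construction. Given a target $\gamma > 0$, I would apply the hypothesis with the parametric choice $N = -M - \varepsilon I$, which automatically satisfies $M + N = -\varepsilon I \prec 0$ for every $\varepsilon > 0$. This yields, for each such $\varepsilon$, some $\xi \in \dom(G)$ such that
\[
\ip{\eta}{M\eta} > -\varepsilon\|\eta\|^2 \quad\text{where}\quad \eta := \bmat{G\xi\\\xi} \neq 0.
\]
I would then set $e_1 := \xi$ and $y_1 := G\xi$ to satisfy~\eqref{1d}, and treat $\zeta := \sbmat{e_2\\y_2} \in \V^2$ as the single remaining design variable; the loop equations~\eqref{1a} and~\eqref{1c} force $u = \sbmat{\xi - y_2 \\ e_2 - G\xi}$, so that $\|u\|^2 = \|\zeta - \eta\|^2$. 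The task thereby reduces to choosing $\zeta$ close to $\eta$ in such a way that $\ip{\zeta}{M\zeta} \geq 0$ while $\|y\|^2 = \|G\xi\|^2 + \|y_2\|^2 > \gamma^2 \|\zeta - \eta\|^2$.

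To construct $\zeta$ systematically, I would diagonalize $M = \lambda_+ v_+ v_+^* + \lambda_- v_- v_-^*$ with $\lambda_+ \geq \lambda_-$ and $v_\pm \in \F^2$ orthonormal, and write $\eta = v_+ x_+ + v_- x_-$ with $x_\pm := v_\pm^* \eta \in \V$, so that $\ip{\eta}{M\eta} = \lambda_+\|x_+\|^2 + \lambda_-\|x_-\|^2$ and $\|\eta\|^2 = \|x_+\|^2 + \|x_-\|^2$. Two extreme cases dispense of themselves: if $M \succeq 0$ then taking $\zeta := \eta$ gives $u = 0$ with $\ip{\zeta}{M\zeta} \geq 0$ and $\|y\| = \|\eta\| > 0 = \gamma\|u\|$; and $M \prec 0$ cannot occur, because the hypothesis at $N = 0$ would reduce to $0 < 0$. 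In the main case $\lambda_+ > 0 > \lambda_-$, I would enforce $\varepsilon < |\lambda_-|$ (which also forces $x_+ \neq 0$, since $x_+ = 0$ would make the hypothesis read $-|\lambda_-|\|x_-\|^2 > -\varepsilon\|x_-\|^2$) and set
\[
\zeta := (1+s)\,v_+ x_+ + v_- x_-, \qquad s := \max\!\left(0,\ \sqrt{\frac{|\lambda_-|}{\lambda_+}}\,\frac{\|x_-\|}{\|x_+\|} - 1\right),
\]
so that $\ip{\zeta}{M\zeta} = \lambda_+(1+s)^2\|x_+\|^2 - |\lambda_-|\|x_-\|^2 \geq 0$, establishing item~\ref{lemit2}. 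The residual boundary subcase $\lambda_+ = 0 > \lambda_-$ is handled analogously by simply taking $\zeta := v_+ x_+$, which annihilates the negative-eigenvalue component outright.

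The main obstacle I anticipate is the norm estimate that secures item~\ref{lemit3}. From orthonormality of $v_+$ in $\F^2$, $\|u\|^2 = \|\zeta - \eta\|^2 = s^2\|x_+\|^2$, while $y_2$ differs from $\eta_2$ only by $s(v_+)_2 x_+$, yielding $\|y\|^2 \geq (1 - 2s)\|\eta\|^2$ via a routine expansion. Rewriting the hypothesis bound as $\|x_+\|^2/\|x_-\|^2 > (|\lambda_-| - \varepsilon)/(\lambda_+ + \varepsilon)$ and substituting into the definition of $s$ yields $s = O(\varepsilon)$ as $\varepsilon \to 0$, so $\|y\|^2/\|u\|^2 = \Omega(1/\varepsilon)$; fixing $\varepsilon$ small enough (in terms of $\gamma$ and the eigenvalues of $M$) then delivers $\|y\| > \gamma\|u\|$. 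The delicate part will be verifying that these estimates hold uniformly over all admissible configurations of $x_\pm$ permitted by the hypothesis---in particular, when $\|x_-\|/\|x_+\|$ approaches the upper bound supplied by the hypothesis---for which the key observation is that the hypothesis couples $\|x_+\|$ and $\|x_-\|$ tightly, so that $\|\eta\|$, $\|x_+\|$, and $\|x_-\|$ all remain comparable quantities as $\varepsilon$ shrinks.
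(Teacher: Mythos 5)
Your proposal is correct, and it shares the paper's overall constructive strategy---fix a small $\varepsilon>0$, invoke the hypothesis with an $N$ of the form $-M-\varepsilon(\cdot)$, set $e_1=\xi$, $y_1=G\xi$, and perturb $\sbmat{y_1\\e_1}$ by an $O(\varepsilon)$ amount to manufacture $\sbmat{e_2\\y_2}$ satisfying Item~2---but the perturbation itself is genuinely different. The paper factors the indefinite $M$ as $P^*JP$ with $J=\diag(1,-1)$, takes $N=-M-\varepsilon P^*P$, and applies the \emph{fixed} map $\sbmat{e_2\\y_2}=(I+\varepsilon P^{-1}JP)\sbmat{y_1\\e_1}$; a single algebraic identity then yields $\ip{\sbmat{e_2\\y_2}}{M\sbmat{e_2\\y_2}}>\varepsilon(1-\varepsilon^2)\,\normmmm{P\sbmat{y_1\\e_1}}^2\ge 0$ with no case analysis beyond ``$M$ indefinite.'' You instead diagonalize $M$, take $N=-M-\varepsilon I$, and apply an \emph{adaptive} one-sided dilation of the positive-eigenvector component, with $1+s$ calibrated to make $\ip{\zeta}{M\zeta}$ exactly nonnegative; this obliges you to extract $s=O(\varepsilon)$ from the hypothesis bound $\norm{x_-}^2/\norm{x_+}^2<(\lambda_++\varepsilon)/(|\lambda_-|-\varepsilon)$ and to dispatch the semidefinite and $\lambda_+=0$ cases separately (the paper dismisses semidefinite $M$ in one line as trivial or vacuous). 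The paper's route buys a cleaner, case-free verification of Item~2 at the cost of introducing the factorization $P$; yours is more elementary and makes the mechanism visible, namely that the hypothesis pins the negative component so tightly that only an $O(\varepsilon)$ correction is ever needed, and the uniformity concern you raise does resolve exactly as you indicate, since your bound on $s$ depends only on $\varepsilon$ and the eigenvalues of $M$, not on $\xi$. One small point to tidy: in a semi-inner product space you should conclude $\norm{\eta}>0$ rather than merely $\eta\ne 0$ (this is what your divisions by $\norm{x_+}$ and the claim $\norm{y}>0$ actually require), and it does follow from the strict hypothesis inequality because $\norm{\eta}=0$ would force both sides to vanish by Cauchy--Schwarz.
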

\begin{proof}
See Appendix~\ref{sec:proofoftwoimpliesone} for a detailed proof.\hfill
\end{proof}
The implication (\ref{thm_it_i})$\impliedby$(\ref{thm_it_ii}) of Theorem~\ref{thm:meta} now directly follows from Lemma~\ref{lem:WorstCaseSignals}.

\subsection{Necessity of interpolation in Theorem~\ref{thm:meta}}\label{sec:achievingnecessity}

The implications $\text{(\ref{thm_it_i})}\iff\text{(\ref{thm_it_ii})}\implies\text{(\ref{thm_it_iii})}$ of Theorem~\ref{thm:meta} hold with great generality. However, the missing implication $\text{(\ref{thm_it_ii})}\impliedby\text{(\ref{thm_it_iii})}$ does not hold in general, because it depends on the choice of $G$ and $\CPhi$. If $\CPhi$ is insufficiently expressive, there may not exist a $\Phi\in\CPhi$ that interpolates the closed-loop signals found in $\text{(\ref{thm_it_ii})}$. We now explore some special cases for which the missing implication holds; in other words, there exists a $\Phi$ that interpolates the closed-loop signals.

\begin{defn} 
We say that a pair $(G,\CPhi)$ is \emph{interpolable} if such a choice implies that (\ref{thm_it_ii})$\impliedby$(\ref{thm_it_iii}) in Theorem~\ref{thm:meta}.
\end{defn}

We now describe simple scenarios in which interpolability is guaranteed for the general semi-inner product setting. First, we make the trivial observation that if $\Phi$ is unconstrained, interpolation is always possible.

\begin{prop}[unconstrained case] \label{thm:singleton_extension}
	If $\CPhi = \RV$, then the pair $(G,\CPhi)$ is interpolable for any $G\in\RV$.
\end{prop}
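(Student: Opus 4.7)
The plan is to establish the missing implication $(\ref{thm_it_ii}) \Longleftarrow (\ref{thm_it_iii})$ by contrapositive, exploiting the fact that $\CPhi = \RV$ places no restrictions whatsoever on candidate $\Phi$. Concretely, I will show $\lnot(\ref{thm_it_ii}) \implies \lnot(\ref{thm_it_iii})$ by, for each prospective bound $\gamma > 0$, producing a $\Phi \in \CPhi$ that satisfies the quadratic constraint~\eqref{Phi} while driving the closed-loop map past $\gamma$.

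The main idea is that when $\CPhi$ is unconstrained, a singleton relation is a legitimate member of $\CPhi$, and this is enough flexibility to realize any prescribed input-output pair. Specifically, given $\gamma > 0$, I would invoke $\lnot(\ref{thm_it_ii})$ to obtain signals $(u,y,e)$ satisfying \eqref{1a}, \eqref{1c}, \eqref{1d}, and \eqref{M2}, with $\norm{y} > \gamma \norm{u}$, and then define $\Phi := \{(e_2, y_2)\} \subseteq \V \times \V$, the single-point relation picking out exactly this pair. This $\Phi$ lies in $\RV = \CPhi$ vacuously.

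Verification then amounts to three short checks. First, $\dom(\Phi) = \{e_2\}$, so the constraint \eqref{Phi} only needs to hold at the single point $\xi = e_2$, where it reduces to \eqref{M2} and is therefore satisfied by hypothesis. Second, \eqref{1b} holds by construction since $\Phi e_2 = y_2$; combined with the other loop equations this yields $(u,y) \in R_{uy}$. Third, the inequality $\norm{y} > \gamma \norm{u}$ is inherited directly from the signals produced by $\lnot(\ref{thm_it_ii})$. Together these three facts contradict (\ref{thm_it_iii}) at the chosen $\gamma$.

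I do not anticipate a real obstacle here: the proposition is essentially a one-line observation, since an unrestricted $\CPhi$ always admits a singleton interpolant and the quadratic constraint becomes trivial off a single point. The content of the statement lies less in its proof than in isolating the mechanism that later results will have to preserve when $\CPhi$ is genuinely constrained (for example, to memoryless, linear, or time-invariant relations), so that interpolation becomes nontrivial.
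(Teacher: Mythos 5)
Your proposal is correct and uses exactly the same mechanism as the paper: the singleton relation $\Phi = \{(e_2,y_2)\}$, whose domain is the single point $e_2$, so that \eqref{Phi} collapses to \eqref{M2}. You simply spell out the contrapositive argument that the paper leaves implicit in its one-line proof.
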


\begin{proof}
If $\CPhi=\RV$, then \eqref{M2} and \eqref{Phi} are equivalent, as we can choose the singleton relation $\Phi = \{(e_2,y_2)\}$.\hfill
\end{proof}

Proposition~\ref{thm:singleton_extension} is not particularly satisfying because it requires the use of a singleton relation $\Phi$. A more interesting case is when we require that $\dom(\Phi) = \V$.\footnote{Relations $\Phi \in \RV$ that satisfy $\dom(\Phi) = \V$ are known as \emph{serial} or \emph{left-total}. They are also called \emph{multi-valued functions}.} Our second result states that interpolability holds for the set of \emph{linear} relations.

\begin{defn}[linear relation]
	Let $\V$ be a semi-inner product space over a field $\F$. Let $x_1,x_2,y_1,y_2 \in \V$ and $\alpha_1,\alpha_2 \in \F$. A relation $R \in \RV$ is \emph{linear} if for all $(x_1,y_1)\in R$ and $(x_2,y_2)\in R$, we have
$(\alpha_1 x_1 + \alpha_2 x_2, \alpha_1 y_1 + \alpha_2 y_2) \in R$. We let $\LV \subseteq \RV$ denote the set of all linear relations.
\end{defn}

\begin{thm}[linear case] \label{thm:linear_extension}
	If $\CPhi = \LV$, then the pair $(G,\CPhi)$ is interpolable for any $G \in \RV$.
\end{thm}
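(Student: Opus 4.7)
The plan is to establish the missing direction (\ref{thm_it_ii})$\impliedby$(\ref{thm_it_iii}) of Theorem~\ref{thm:meta} by an explicit interpolation construction that is independent of $G$. Fix any signals $(u,y,e)$ satisfying the loop equations \eqref{1a}, \eqref{1c}, \eqref{1d} and the quadratic inequality \eqref{M2}; I will construct a linear relation $\Phi \in \LV$ such that $(e_2, y_2) \in \Phi$ (so that \eqref{1b} holds and hence the full interconnection \eqref{interconnect} is satisfied, placing $(u,y)$ in $R_{uy}$) and such that $\Phi$ obeys \eqref{Phi} on its entire graph.

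The natural candidate is the smallest linear relation containing the single point $(e_2, y_2)$, namely
\[
\Phi \;\defeq\; \set{(\alpha e_2,\, \alpha y_2)}{\alpha \in \F} \;\subseteq\; \V\times\V.
\]
This set is closed under $\F$-linear combinations by construction, so $\Phi \in \LV$, and $(e_2, y_2) \in \Phi$ via $\alpha=1$. The inequality \eqref{Phi} must be verified for every pair in the graph of $\Phi$, which reduces to
\[
\ip{\bmat{\alpha e_2 \\ \alpha y_2}}{M \bmat{\alpha e_2 \\ \alpha y_2}} \;=\; |\alpha|^2 \ip{\bmat{e_2 \\ y_2}}{M \bmat{e_2 \\ y_2}} \;\ge\; 0 \quad\text{for all } \alpha\in\F,
\]
and this follows immediately from the sesquilinearity of $\ip{\cdot}{\cdot}$ together with hypothesis \eqref{M2}.

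With this $\Phi$ in hand, applying (\ref{thm_it_iii}) with constant $\gamma$ to the pair $(u,y) \in R_{uy}$ yields $\norm{y} \le \gamma \norm{u}$, which is exactly the conclusion required by (\ref{thm_it_ii}), with the same $\gamma$. I do not foresee a real obstacle in this argument: the construction is essentially a one-liner, and the only computation required is the homogeneity of the form $\ip{\cdot}{M\cdot}$ under simultaneous scalar multiplication of both arguments. The one minor subtlety worth flagging is that $\Phi$ need not be single-valued when $e_2 = 0$; this is harmless because \eqref{Phi} is required to hold for every pair in the graph of $\Phi$, and the displayed computation covers all such pairs $(\alpha e_2, \alpha y_2)$ uniformly.
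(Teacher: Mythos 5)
Your proof is correct, but it takes a genuinely different and much lighter route than the paper. The paper proves Theorem~\ref{thm:linear_extension} via Lemma~\ref{lem:extension}, an extension lemma that (after disposing of several degenerate cases) builds a worst-case $\Phi$ that is a single-valued \emph{linear function} with $\dom(\Phi)=\V$, using the orthogonal decomposition $x = x_{ey}+x_\perp$ onto the span of $\hat e,\hat y$ and a carefully chosen gain $\eta$ on the orthogonal complement so that \eqref{Phi} holds on all of $\V$. You instead take the minimal linear relation containing the point, $\Phi=\set{(\alpha e_2,\alpha y_2)}{\alpha\in\F}$, and use homogeneity of the quadratic form. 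Since $\LV$ is defined as the set of \emph{all} linear relations with no restriction on their domains, your $\Phi$ is a legitimate member of $\CPhi$, and the theorem as literally stated follows; your handling of the multi-valued case $e_2=0$ is also correct. The trade-off is that your interpolant is essentially the linear closure of the singleton relation from Proposition~\ref{thm:singleton_extension}, which the paper explicitly calls ``not particularly satisfying''; the heavier construction in Lemma~\ref{lem:extension} buys a full-domain, single-valued worst-case $\Phi$, which matters for the later discussion of specializing to extended spaces (where one wants $\Phi$ to be an actual operator and asks whether it can be made causal), whereas your argument buys brevity and avoids all case analysis. If the intended reading of interpolability for this theorem is only the implication (\ref{thm_it_ii})$\impliedby$(\ref{thm_it_iii}) with $\CPhi=\LV$ as defined, your proof stands on its own.
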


\begin{proof}
We explicitly construct a worst-case $\Phi \in \LV$. See Appendix~\ref{sec:proofoflinearextension} for a detailed proof.\hfill
\end{proof}

Proposition~\ref{thm:singleton_extension} and Theorem~\ref{thm:linear_extension} both provide conditions that ensure necessity of Theorem~\ref{thm:meta}. In both cases, there are no constraints on $G$; it could be nonlinear, for example.

\section{Specialization to extended spaces}
\label{sec3}

The most common application of robust stability is when $(y,u,e)$ are time-domain signals belonging to an extended space such as $\LLtwoe$ or $\ltwoe$~\cite{zhou1996robust}. This forces us to deal with well-posedness, causality, and stability.

\paragraph{Well-posedness.}
Assuming $G$ and $\Phi$ are relations, as we do in Theorem~\ref{thm:meta}, is not unprecedented in the literature \cite{Zames_inputoutput1966_part1,vidyasagar2002nonlinear,Schaft2017L2,khong2018converse,safonov1980stability}.
This ensures the closed-loop relations $R_{uy}$ and $R_{ue}$ are always well-defined, but they may be \emph{empty}. When $G$ and $\Phi$ are assumed to be operators instead of relations, then well-posedness must either be assumed or proved. Specifically, we need an assurance of the existence and uniqueness of solutions $e$ and $y$ for all choices of $u$.

\paragraph{Causality.}
When working in extended spaces such as $\Ltwoe$, a common assumption is that $G$ and $\Phi$ are causal operators \cite{zhou1996robust,megretski1997system,Zames_inputoutput1966_part2,vidyasagar2002nonlinear,desoer2009feedback,teel1996graphs}. Then, a useful fact is that a well-posed interconnection of causal maps is causal~\cite[Prop.~1.2.14]{Schaft2017L2}, so the closed-loop map will be causal.

\paragraph{Stability.}
The goal when working with time-domain signals is typically to prove stability. With Theorem~\ref{thm:meta}, we prove boundedness of the closed-loop map, i.e., $\norm{y} \leq \gamma \norm{u}$, and therefore input-output stability.

To specialize Theorem~\ref{thm:meta} to extended spaces, set $\V = \Ltwoe$ and use the semi-inner product $\ip{\cdot}{\cdot}_T$ defined by projecting both signals onto $[0,T]$ and applying the $\Ltwoe$ inner product. Then, use the fact that if $H$ is a causal map,  $\norm{Hx}\leq \gamma\norm{x}$ for all $x\in\Ltwo$ if and only if $\norm{Hx}_T\leq \gamma\norm{x}_T$ for all $x$ and $T$ (see, for example, \cite[Lem.~6.2.11]{vidyasagar2002nonlinear}).

Different choices of the matrices $M$ and $N$ allow the representation of different cones. For example, we can represent different flavors of passivity (input-strict, output-strict, extended), small-gain results, the circle criterion, and other conic sectors that allow $G$ or $\Phi$ to be unbounded/unstable.

To illustrate these various transformations, consider for example the classical passivity result by Vidyasagar, which is a sufficient-only result, and may be found in~\cite[Thm.~6.7.43]{vidyasagar2002nonlinear}. 
\begin{thm}[Vidyasagar]\label{thm:vidyasagar}
	Consider the system 
	\[
	\left\{
	\begin{array}{lr}
	e_1  = u_1 - y_2, & y_1 = G e_1\\
	e_2 = u_2 + y_1, & y_2 = \Phi e_2
	\end{array}
	\right.
	\]
	Suppose there exist constants $\epsilon_1$, $\epsilon_2$, $\delta_1$, $\delta_2$ such that for all $\xi\in\ltwoe$ and for all $T\ge 0$
	\begin{subequations}\label{eq:passivityeq}
		\begin{align}
		\ip{\xi}{G\xi}_T &\ge \epsilon_1 \norm{\xi}_T^2 + \delta_1\norm{G\xi}_T^2,\\
		\ip{\xi}{\Phi \xi}_T &\ge \epsilon_2 \norm{\xi}_T^2 + \delta_2\norm{\Phi \xi}_T^2.
		\end{align}
	\end{subequations}
	Then the system is $\ltwo$-stable if $\delta_1+ \epsilon_2>0$ and $\delta_2+\epsilon_1>0$.		
\end{thm}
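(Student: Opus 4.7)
The plan is to obtain Theorem~\ref{thm:vidyasagar} as a specialization of the sufficiency direction (\ref{thm_it_i})$\implies$(\ref{thm_it_iii}) of Theorem~\ref{thm:meta}. First, I would set $\V = \ltwoe$ equipped, for each fixed $T\ge 0$, with the truncated semi-inner product $\ip{\cdot}{\cdot}_T$, which is indeed a semi-inner product on $\ltwoe$. Second, Vidyasagar's interconnection uses a negative feedback sign ($e_1 = u_1 - y_2$) whereas \eqref{interconnect} uses the positive sign, so I would substitute $\tilde\Phi \defeq -\Phi$ to recast Vidyasagar's system in the form of Fig.~\ref{fig:FeedbackDiagram}; note that $\tilde\Phi$ inherits a passivity inequality from $\Phi$ with the sign of the cross term flipped.

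Next, the two passivity hypotheses \eqref{eq:passivityeq} are precisely quadratic forms in $(G\xi,\xi)$ and $(\xi,\tilde\Phi\xi)$, respectively, so they match the $N$- and $M$-inequalities of Theorem~\ref{thm:meta}. A short calculation gives
\[
N = \bmat{-\delta_1 & 1/2\\ 1/2 & -\epsilon_1},\qquad
M = \bmat{-\epsilon_2 & -1/2\\ -1/2 & -\delta_2},\qquad
M+N = \bmat{-(\delta_1+\epsilon_2) & 0\\ 0 & -(\delta_2+\epsilon_1)},
\]
so the negative-definiteness requirement $M+N\prec 0$ of~(\ref{thm_it_i}) becomes exactly $\delta_1+\epsilon_2>0$ and $\delta_2+\epsilon_1>0$, which is Vidyasagar's hypothesis. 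Applying the sufficiency direction (\ref{thm_it_i})$\implies$(\ref{thm_it_iii}) of Theorem~\ref{thm:meta} with $\CPhi = \{\tilde\Phi\}$ yields a constant $\gamma>0$, independent of $T$, such that $\norm{y}_T \le \gamma\norm{u}_T$ for all $(u,y)\in R_{uy}$ and every $T\ge 0$.

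Finally, I would upgrade this family of truncated bounds to genuine $\ltwo$-stability. The closed-loop map $u\mapsto y$ is causal as a well-posed interconnection of causal maps, and for causal operators the $T$-uniform truncation bound coincides with the $\ltwo$ bound by \cite[Lem.~6.2.11]{vidyasagar2002nonlinear}, as invoked in the paragraph preceding Theorem~\ref{thm:vidyasagar}. The main obstacle is bookkeeping rather than mathematics: one must match the sign convention between the two interconnection diagrams (via $\tilde\Phi=-\Phi$) so that the two quadratic forms assemble into a diagonal $M+N$, and then recognize that $\delta_1+\epsilon_2>0$, $\delta_2+\epsilon_1>0$ is precisely the condition that makes that diagonal matrix negative definite. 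Everything else is a direct application of the $\text{(\ref{thm_it_i})}\implies\text{(\ref{thm_it_iii})}$ implication already proved in Section~\ref{sec:SufficiencyProof}.
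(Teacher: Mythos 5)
Your proposal is correct and follows essentially the same route as the paper: the sign flip $\Phi\mapsto-\Phi$, the identification of $N$ and $M$ from \eqref{eq:passivityeq}, the observation that the diagonal matrix $M+N\prec 0$ reduces exactly to $\delta_1+\epsilon_2>0$ and $\delta_2+\epsilon_1>0$, and the use of the truncated semi-inner product $\ip{\cdot}{\cdot}_T$ together with causality to pass from $T$-uniform bounds to $\ltwo$-stability. If anything, your write-up is slightly more explicit than the paper's, which defers the detailed verification to \cite{cyrus2020dissertation}.
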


Theorem~\ref{thm:vidyasagar} uses a negative sign convention and is expressed in discrete time. To match \ref{thm:meta}, let $\Phi\mapsto -\Phi$ in Theorem~\ref{thm:vidyasagar} and compare~\eqref{G} and~\eqref{Phi} to~\eqref{eq:passivityeq}, which yields
\[
N =\bmat{-\delta_1 & \frac{1}{2}\\ \frac{1}{2} & -\epsilon_1}
\quad\text{and}\quad
M = \bmat{-\epsilon_2& -\frac{1}{2}\\ -\frac{1}{2} & -\delta_2 }.
\]
In Theorem \ref{thm:meta}, we require $M+N\prec 0$; thus $\delta_1+ \epsilon_2>0$ and $\delta_2+\epsilon_1>0$, which recovers Theorem~\ref{thm:vidyasagar}. A similar approach can be used to recover all the results from Table~\ref{Tab:LiteratureReview} involving static constraints. For a detailed proof, see~\cite{cyrus2020dissertation}.

Unlike Theorem~\ref{thm:meta}, Theorem~\ref{thm:linear_extension} does not specialize as nicely to extended spaces. In particular, the construction of a worst-case $\Phi$ from Lemma~\ref{lem:extension} will not, in general, be causal. In order to achieve interpolability, one must typically make additional assumptions, such as $G$ and $\Phi$ being linear and time-invariant (see Table~\ref{Tab:LiteratureReview}). In such cases, a worst-case $\Phi$ can be chosen as a static gain cascaded with a time delay \cite[\S 6.6.(112,126)]{vidyasagar2002nonlinear}.

\section{Conclusion}

We studied robust stability results involving a plant $G$ connected with a nonlinearity $\Phi$ belonging to a conic sector, e.g.  passivity, small-gain, circle criterion, conicity, or extended conicity. Our goal was to distill the vast literature on this topic and state the most general and unified results possible.

Looking beyond the scope of this paper, it would be interesting to see if our semi-inner product framework could be used to recover results involving dynamic constraints (dissipativity, multiplier theory, integral quadratic constraints).

\section{Acknowledgments}
The authors would like to thank R.~Boczar, \mbox{L.~Bridgeman}, R.~Brockett, S.~Z.~Khong, A.~Packard, A.~Rantzer, P.~Seiler, A.~van~der~Schaft, B.~Van~Scoy, and M.~Vidyasagar for helpful discussions and comments through various stages of this work.

\newpage
\if\MODE3
\appendix
\else
\appendices
\fi
\section{Proofs for semi-inner product spaces}\label{sec:appendix1}

\subsection{Proof of Lemma \ref{lem:WorstCaseSignals}}
\label{sec:proofoftwoimpliesone}
\begin{proof}
The result is trivial or vacuous if $M$ is semidefinite, so we will assume $M$ is indefinite, writing
\begin{equation}\label{eq:M}
	M = P^* J P,
\end{equation}
where $J \defeq \diag(1,-1)$ and $P \in \F^{2\times 2}$ is invertible.
Pick any $0 < \varepsilon < 1$ and let $N = -M - \varepsilon P^* P$. By assumption, we can choose some $\xi \in \dom(G)$ such that
\begin{equation}\label{eq:xi_ineq}
	\ip{\bmat{G\xi \\ \xi}}{N \bmat{G\xi \\ \xi}} < 0.
\end{equation}
Now pick $e_1 = \xi$, $y_1 = G\xi$, and
\[
\bmat{e_2\\y_2} = \bmat{y_1 \\ e_1} + \bmat{u_2 \\ -u_1}\text{, with }	\bmat{u_2 \\ -u_1} = \varepsilon\, P^{-1}JP \bmat{y_1 \\ e_1}.
\]
By construction, this choice satisfies Item \ref{lemit1} of Lemma~\ref{lem:WorstCaseSignals}.
Substituting our choice of $N$ into~\eqref{eq:xi_ineq}, we obtain
\begin{equation}\label{eq:M_ineq}
\ip{\bmat{y_1\\e_1}}{M\bmat{y_1\\e_1}} > -\varepsilon\, \normmmm{P\bmat{y_1 \\ e_1}}^2.
\end{equation}
Substituting the definitions of $u_1$, $u_2$, $e_2$, $y_2$, $M$ in terms of $y_1$, $e_1$, $\varepsilon$, $P$, $J$, and using the inequality~\eqref{eq:M_ineq}, we have
\begin{align*}
	\ip{\bmat{e_2\\y_2}}{M\bmat{e_2\\y_2}} &= 
	\ip{(I+\varepsilon P^{-1}J P)\bmat{y_1 \\ e_1}}{M(I+\varepsilon P^{-1}J P)\bmat{y_1 \\ e_1} }\\
	&= 
	\ip{(I+\varepsilon J )P\bmat{y_1 \\ e_1}}{P^{-*}MP^{-1}(I+\varepsilon J)P \bmat{y_1 \\ e_1} }\\
	&\overset{\eqref{eq:M}}{=} 
	\ip{(I+\varepsilon J )P\bmat{y_1 \\ e_1}}{J(I+\varepsilon J)P \bmat{y_1 \\ e_1} }\\
	&=\ip{P\bmat{y_1\\e_1}}{\bigl( (1+\varepsilon^2)J + 2\varepsilon I\bigr)P\bmat{y_1\\e_1}} \\
	&\overset{\eqref{eq:M}}{=} (1+\varepsilon^2)\ip{\bmat{y_1\\e_1}}{M\bmat{y_1\\e_1}} +2\varepsilon\normmmm{P\bmat{y_1\\e_1}}^2 \\
		&\overset{\eqref{eq:M_ineq}}{>} \varepsilon\bigl(1-\varepsilon^2\bigr) \normmmm{P\bmat{y_1\\e_1}}^2 \geq 0,
\end{align*}
which verifies Item~\ref{lemit2} of Lemma~\ref{lem:WorstCaseSignals}. Finally, we have:
\begin{align*}
\normmmm{\bmat{u_1 \\ u_2}} &= \normmmm{\bmat{u_2 \\ -u_1}}
= \normmmm{\varepsilon P^{-1}JP\bmat{y_1 \\ e_1}}
 \leq \varepsilon  \kappa \normmmm{\bmat{y_1\\e_1}},
\end{align*}
where $\kappa \defeq \normm{P^{-1} J P}>0$. Applying the triangle inequality,
\begin{align*}
	\norm{u} = \normmmm{\bmat{u_1 \\ u_2}} \leq  \varepsilon\kappa \normmmm{\bmat{y_1\\e_1}}
	= \varepsilon\kappa\normmmm{\bmat{y_1\\y_2+u_1}} 
& \leq \varepsilon\kappa\left( \normmmm{\bmat{y_1 \\ y_2}} + \normmmm{\bmat{0\\u_1}} \right) \leq \varepsilon\kappa \bigl( \norm{y} + \norm{u} \bigr).
\end{align*}
Rearranging, we obtain $\norm{y} \geq \frac{1-\varepsilon\kappa}{\varepsilon\kappa}\norm{u}$. Since $\varepsilon$ can be chosen arbitrarily small, we can make the bound $\gamma$ arbitrarily large in Item~\ref{lemit3} of Lemma~\ref{lem:WorstCaseSignals}, thus completing the proof.\hfill
\end{proof}

\newpage
\subsection{Proof of Theorem~\ref{thm:linear_extension}}\label{sec:proofoflinearextension}

We begin by proving that a pair of points satisfying a quadratic constraint can be extended to a linear relation that satisfies the quadratic constraint everywhere.

\begin{lem}[extension lemma]\label{lem:extension} Let $\V$ be a semi-inner product space and let $M=M^*\in\F^{2\times 2}$. Suppose $e,y \in \V$ satisfy
\begin{equation}\label{eq:eyM}
	\ip{ \bmat{e \\ y}}{M\bmat{e \\ y}} \geq 0.
\end{equation}
There exists $\Phi \in \LV$ such that:
\begin{enumerate}
	\itemsep=1mm
	\item $(e,y) \in \Phi$.
	\item $\ip{ \bmat{x \\ \Phi x}}{M\bmat{x \\ \Phi x} } \geq 0$ for all $x \in \dom(\Phi)$.
\end{enumerate}
Moreover, if $\norm{e} > 0$, we can construct $\Phi$ that is a \emph{linear function}, with $\dom(\Phi) = \V$.
\end{lem}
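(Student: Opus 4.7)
The first part admits a direct construction: if $e \neq 0$, take $\Phi := \{(\alpha e, \alpha y) : \alpha \in \F\} \in \LV$ (so $\dom(\Phi) = \mathrm{span}(e)$), and if $e = 0$ take $\Phi := \{(0, \alpha y) : \alpha \in \F\}$. Both cases yield $\Phi \in \LV$ with $(e,y) \in \Phi$, and every element of $\Phi$ is a scalar multiple of $(e,y)$ (respectively of $(0,y)$), so
\[
\ip{\bmat{x \\ z}}{M\bmat{x \\ z}} = |\alpha|^2 \ip{\bmat{e \\ y}}{M\bmat{e \\ y}} \geq 0
\]
follows immediately from the hypothesis.

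For the ``moreover'' clause, assume $\norm{e} > 0$. The plan is to reduce the construction to a contractive extension problem via the factorization $M = P^{*} J P$ already used in Appendix~\ref{sec:proofoftwoimpliesone}, with $P \in \F^{2\times 2}$ invertible and $J = \mathrm{diag}(1,-1)$. Setting $\bmat{\tilde e \\ \tilde y} := P \bmat{e \\ y}$, the hypothesis becomes the small-gain inequality $\norm{\tilde y} \leq \norm{\tilde e}$. Assuming $\norm{\tilde e} > 0$, define the rank-one contraction
\[
\tilde\Phi \tilde x := \frac{\ip{\tilde e}{\tilde x}}{\norm{\tilde e}^{2}} \tilde y,
\]
which satisfies $\tilde\Phi \tilde e = \tilde y$ and $\norm{\tilde\Phi \tilde x} \leq \norm{\tilde x}$ by Cauchy--Schwarz. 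Pulling back by requiring $P \bmat{x \\ \Phi x} = \bmat{\tilde x \\ \tilde\Phi \tilde x}$ and setting $Q := P^{-1}$ yields the M\"obius-type formula
\[
\Phi := (Q_{21} I + Q_{22} \tilde\Phi)(Q_{11} I + Q_{12} \tilde\Phi)^{-1}.
\]
By construction $\Phi e = y$, and the desired quadratic inequality is, after applying $P$ to $\bmat{x \\ \Phi x}$, equivalent to the contractivity of $\tilde\Phi$ just established.

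The principal obstacle is verifying that $Q_{11} I + Q_{12} \tilde\Phi$ is invertible, so that $\Phi$ is a bona fide linear function with $\dom(\Phi) = \V$. Because $\tilde\Phi$ has rank one, a Sherman--Morrison-style computation reduces invertibility to a single scalar being nonzero, which can be arranged using the residual freedom to replace $P$ by $UP$ for a $J$-unitary $U$. A few degenerate situations are handled separately: if $M \succeq 0$ the constraint is automatic and $\Phi x := \ip{e}{x} y / \norm{e}^{2}$ suffices; if $M \preceq 0$ the hypothesis forces equality, which tightly constrains $\Phi$ and can be dealt with by inspection; and the boundary case $\norm{\tilde e} = 0$ (for every admissible factorization) can likewise be reduced to one of the previous ones by a direct computation.
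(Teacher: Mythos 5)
Your first construction---taking $\Phi$ to be the one-dimensional linear relation spanned by $(e,y)$---is correct, complete, and more economical than the paper's treatment of the relation-valued part of the claim. For the ``moreover'' clause you take a genuinely different route. You loop-shift via $M=P^*JP$ so that the hypothesis \eqref{eq:eyM} becomes the contraction inequality $\norm{\tilde y}\le\norm{\tilde e}$, extend $\tilde e\mapsto\tilde y$ to a rank-one contraction $\tilde\Phi$, and pull back through a linear-fractional transformation. The paper instead works with $M$ directly: it decomposes $x=x_{ey}+x_\perp$ against $\mathrm{span}(\hat e,\hat y)$, defines $\Phi$ on the two-dimensional piece by an explicit formula that makes the quadratic form there a nonnegative multiple of $\ip{\sbmat{e\\y}}{M\sbmat{e\\y}}$, and sets $\Phi x_\perp\defeq\eta x_\perp$ for any $\eta$ with $\sbmat{1\\\eta}^*M\sbmat{1\\\eta}\ge 0$ (such $\eta$ exists since $M\nprec 0$). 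The paper's route never meets an invertibility question; that is the price of your LFT.

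The two steps you defer are exactly where the content lies, and both do close, so you should write them out. (1)~Invertibility: the rank-one inverse formula shows that $Q_{11}I+Q_{12}\tilde\Phi$ is bijective iff $Q_{11}\neq0$ and $Q_{11}\norm{\tilde e}^2+Q_{12}\ip{\tilde e}{\tilde y}\neq0$, and the latter quantity equals $\ip{\tilde e}{e}$ since $e=Q_{11}\tilde e+Q_{12}\tilde y$. Replacing $P$ by $U_tP$ with $U_t=\sbmat{\cosh t & \sinh t\\ \sinh t & \cosh t}$, each condition fails for at most one $t\in\R$: the first because the second column of $P$ is nonzero, the second because $\ip{\tilde e}{e}=\ip{\tilde y}{e}=0$ would give $\norm{e}^2=\overline{Q_{11}}\ip{\tilde e}{e}+\overline{Q_{12}}\ip{\tilde y}{e}=0$, contradicting $\norm{e}>0$. (The boundary case $\norm{\tilde e}=0$ never occurs: it forces $\norm{\tilde y}=0$ and hence $\norm{e}=0$.) (2)~The case $M\preceq0$ is not quite ``by inspection'' in a semi-inner product space. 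Writing $M=-ww^*$ with $w_2\neq0$ (the subcases $w_2=0$ and $M\prec0$ are vacuous given $\norm{e}>0$), the constraint reads $\norm{\bar w_1x+\bar w_2\Phi x}=0$ for all $x$, and you must take $\Phi x\defeq-(\bar w_1/\bar w_2)x+\norm{e}^{-2}\ip{e}{x}\,n$ with $n\defeq y+(\bar w_1/\bar w_2)e$; the hypothesis gives $\norm{n}=0$, which is what makes the correction term harmless while still achieving $\Phi e=y$ exactly. With these two arguments supplied, your proof is correct.
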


Using Lemma~\ref{lem:extension}, we can prove Theorem~\ref{thm:linear_extension} by contradiction. Indeed, if Item~(\ref{thm_it_ii}) of Theorem~\ref{thm:meta} fails, then for any $\gamma > 0$, there exist $e_2,y_2 \in \V$ such that~\eqref{M2} and \eqref{1a}, \eqref{1c}, \eqref{1d} hold, with $\norm{y} > \gamma \norm{u}$. Applying Lemma~\ref{lem:extension} to the pair $(e_2,y_2)$, we can produce $\Phi \in \LV \subseteq \CPhi$ such that \eqref{Phi} holds, and thus \eqref{1b} holds, $(u,y) \in R_{uy}$, and therefore Item~(\ref{thm_it_iii}) of Theorem~\ref{thm:meta} fails, as required. All that remains is to prove Lemma~\ref{lem:extension}.

\begin{proof}
	We begin by considering some special cases.
	\paragraph{Special case with $\norm{e}=0$.} Here, $\ip{e}{y}=0$ by Cauchy--Schwarz. If $\norm{y}=0$, define $\Phi \defeq \set{(z,x)}{ \norm{z}=\norm{x}=0 }$. This is a degenerate case. If $\norm{y}>0$ instead, we have by assumption that
	$
		M_{22} \norm{y}^2 =
		\ip{ \bmat{e \\ y}}{M\bmat{e \\ y}} \geq 0
		$.
	Therefore, $M_{22} \geq 0$. Define $\Phi = \set{(z,x)}{\norm{z}=0}$. Roughly, $\Phi$ is the linear relation whose graph is a \emph{vertical line}.

	\paragraph{Special case with $\norm{e}>0$ and $\norm{y}=0$.}
	As in the previous case, we must have $\ip{e}{y}=0$. By assumption,
	$
		M_{11} \norm{e}^2 = \ip{\bmat{e \\ y}}{M\bmat{e \\ y}} \geq 0
	$.
	So, $M_{11} \geq 0$. Let
	$\Phi x \defeq 0$, so
	$
		\ip{\bmat{x \\ \Phi x}}{M\bmat{x \\ \Phi x}} = M_{11} \norm{x}^2 \geq 0\quad\text{for all }x\in\V
	$.
	Henceforth, we will assume that $\norm{e}>0$ and $\norm{y}>0$. Define the normalized vectors $\hat e \defeq \frac{e}{\norm{e}}$ and $\hat y \defeq \frac{y}{\norm{y}}$. Also define $\rho \defeq \langle \hat e, \hat y\rangle$. Note that by Cauchy--Schwarz, we have $|\rho|\leq 1$.\footnote{Recall that in general, inner products are elements of $\F$, so $\rho$ may be a complex number.}

	\paragraph{Special case: $|\rho| = 1$.}
	Define $\Phi x = \rho \frac{\norm{y}}{\norm{e}} x$ and obtain:
	\[
	\ip{\bmat{x \\ \Phi x}}{M\bmat{x \\ \Phi x}} =
	\frac{\norm{x}^2}{\norm{e}^2}
	\ip{\bmat{e \\ y}}{M\bmat{e \\ y}} \geq 0.
	\]

	\paragraph{General case: $|\rho|<1$.}
	
	Due to \eqref{eq:eyM}, we have $M\nprec 0$. So there must exist some $\eta \in \F$ such that $\sbmat{1 \\ \eta}^*M\sbmat{1 \\ \eta} \geq 0$.
	For any $x\in\V$, apply the projection theorem to decompose $x = x_{ey} + x_\perp$, where $x_{ey}$ is a linear combination of $\hat e$ and $\hat y$ and $x_\perp$ is orthogonal to both $\hat e$ and $\hat y$. This yields
	$$
	\begin{aligned}
		x_{ey} &\defeq \left( \frac{\langle \hat e, x\rangle - \rho \langle \hat y, x \rangle}{1-|\rho|^2} \right) \hat e + \left( \frac{\langle \hat y, x\rangle - \bar\rho \langle \hat e, x \rangle}{1-|\rho|^2} \right) \hat y,\\
		x_\perp &\defeq x - x_{ey}.
	\end{aligned}
	$$
	Note that if $x=e$, we have $e_{ey}=e$ and $e_\perp=0$.
	We also have $\norm{x}^2 = \norm{x_{ey}}^2 + \norm{x_\perp}^2$.
	Define the unit vectors
	\[
	\hat e_\perp \defeq \frac{\hat y - \rho \hat e}{\sqrt{1-|\rho|^2}}
	\quad\text{and}\quad
	\hat y_\perp \defeq \frac{\bar\rho\hat y - \hat e}{\sqrt{1-|\rho|^2}}.
	\]
	The vectors $\hat e_\perp$ and $\hat y_\perp$ are orthogonal to $\hat e$ and $\hat y$, respectively.
	Write $M_{12} = |M_{12}|e^{i\varphi}$ (polar decomposition). Since $M_{21} = \overline{M_{12}}$, we have: $e^{-2i\varphi}M_{12} = M_{21}$.
	Finally, define $\Phi$ as
	$$
	\Phi x \defeq \frac{\norm{y}}{\norm{e}}\Bigl(\langle \hat e,x_{ey}\rangle\hat y + e^{-2i\varphi} \langle \hat e_\perp, x_{ey}\rangle \hat y_\perp \Bigr) + \eta\, x_\perp.
	$$
	The function $\Phi$ is linear 
	and using the fact that $e_{ey}=e$ and $e_\perp=0$, it follows that $\Phi e = y$.
	Moreover, one can check that
	$
	\norm{\Phi x_{ey}} = \frac{\norm{y}}{\norm{e}}\norm{x_{ey}}
	$
	and
	$
	\mathrm{Re}\Bigl( M_{12} \langle x_{ey}, \Phi x_{ey} \rangle \Bigr) = \frac{\norm{y}}{\norm{e}}\norm{x_{ey}}^2\,\mathrm{Re}(M_{12} \rho).
	$
	Thus, $\bmat{x \\ \Phi x}= \bmat{x_{ey} \\ \Phi x_{ey}}+\bmat{x_\perp \\ \eta x_\perp }$ and $\left\langle \bmat{x \\ \Phi x},M\bmat{x \\ \Phi x} \right\rangle$
	\begin{equation*}
		= \left\langle \bmat{x_{ey} \\ \Phi x_{ey}},M\bmat{x_{ey} \\ \Phi x_{ey}} \right\rangle\\
		+\left\langle \bmat{x_\perp \\ \eta x_\perp},M\bmat{x_\perp \\ \eta x_\perp}\right\rangle.
	\end{equation*}
	The first term simplifies to: $\left\langle \bmat{x_{ey} \\ \Phi x_{ey}},M\bmat{x_{ey} \\ \Phi x_{ey}} \right\rangle$
	\begin{align*}
		& = M_{11}\|x_{ey}\|^2 + 2\,\mathrm{Re}\Bigl( M_{12} \langle x_{ey}, \Phi x_{ey} \rangle \Bigr)
		+ M_{22}\|\Phi x_{ey}\|^2 \\
		&= \|x_{ey}\|^2 \left( M_{11} + 2 \,\mathrm{Re}( M_{12} \rho ) \frac{\|y\|}{\|e\|} + M_{22} \frac{\|y\|^2}{\|e\|^2} \right) \\
		&= \frac{\|x_{ey}\|^2}{\|e\|^2} \left( M_{11} \|e\|^2 + 2\,\mathrm{Re}\left( M_{12} \langle e,y \rangle\right)  + M_{22}\|y\|^2 \right) \\
		&= \frac{\|x_{ey}\|^2}{\|e\|^2}\left\langle \bmat{e \\ y},M\bmat{e \\ y} \right\rangle 
		\geq 0.
	\end{align*}
	The second term simplifies to
	\[
	\left\langle \bmat{x_\perp \\ \eta x_\perp},M\bmat{x_\perp \\ \eta x_\perp}\right\rangle = \|x_\perp\|^2 \bmat{1 \\ \eta}^*M\bmat{1 \\ \eta} \geq 0.
	\]
	Therefore, we have $\left\langle \bmat{x \\ \Phi x},M\bmat{x \\ \Phi x} \right\rangle\geq 0$, as required.\hfill
\end{proof}


\bibliographystyle{abbrv}
\if\MODE3\small\bibliography{refs}
\else\bibliography{refs}\fi

\begin{thebibliography}{10}

\bibitem{anderson1972small}
B.~D.~O. Anderson.
\newblock The small-gain theorem, the passivity theorem and their equivalence.
\newblock {\em Journal of the Franklin Inst.}, 293(2):105--115, 1972.

\bibitem{bridgeman2016extended}
L.~J. Bridgeman and J.~R. Forbes.
\newblock The extended conic sector theorem.
\newblock {\em IEEE Trans. Autom. Control}, 61(7):1931--1937, 2016.

\bibitem{Bridgeman2018Comparative}
L.~J. Bridgeman and J.~R. Forbes.
\newblock A comparative study of input-output stability results.
\newblock {\em IEEE Trans. Autom. Control}, 63(2):463--476, 2018.

\bibitem{brockett1966status}
R.~Brockett.
\newblock The status of stability theory for deterministic systems.
\newblock {\em IEEE Trans. Autom. Control}, 11(3):596--606, 1966.

\bibitem{carrasco2019conditions}
J.~Carrasco and P.~Seiler.
\newblock Conditions for the equivalence between {IQC} and graph separation
  stability results.
\newblock {\em Int. J. Control}, 92(12):2899--2906, 2019.

\bibitem{conway}
J.~B. Conway.
\newblock {\em A course in functional analysis}.
\newblock Springer-Verlag, New York, 1990.

\bibitem{cyrus2020dissertation}
S.~Cyrus.
\newblock {\em Stability of Interconnected Sector-bounded Systems, with
  Application to Designing Optimization Algorithms}.
\newblock PhD thesis, University of Wisconsin--Madison, 2021.

\bibitem{cyrus2019unified}
S.~Cyrus and L.~Lessard.
\newblock Unified necessary and sufficient conditions for the robust stability
  of interconnected sector-bounded systems.
\newblock In {\em IEEE Conf. Decision Contr.}, pages 7690--7695, 2019.

\bibitem{desoer2009feedback}
C.~A. Desoer and M.~Vidyasagar.
\newblock {\em Feedback systems: input-output properties}, volume~55.
\newblock SIAM, 1975.

\bibitem{fu2005integral}
M.~Fu, S.~Dasgupta, and Y.~C. Soh.
\newblock Integral quadratic constraint approach vs. multiplier approach.
\newblock {\em Automatica}, 41(2):281--287, 2005.

\bibitem{hestenes}
M.~R. Hestenes.
\newblock {\em Optimization theory: The finite dimensional case}.
\newblock Wiley, 1975.

\bibitem{khong2021}
S.~Z. Khong and C.-Y. Kao.
\newblock Converse theorems for integral quadratic constraints.
\newblock {\em IEEE Trans. Autom. Control}, 66(8):3695--3701, 2020.

\bibitem{khong2018converse}
S.~Z. Khong and A.~J. van~der Schaft.
\newblock On the converse of the passivity and small-gain theorems for
  input-output maps.
\newblock {\em Automatica}, 97:58--63, 2018.

\bibitem{lur1944theory}
A.~Lur'e and V.~Postnikov.
\newblock On the theory of stability of control systems.
\newblock {\em Applied mathematics and mechanics}, 8(3):246--248, 1944.

\bibitem{mccourt2010control}
M.~J. McCourt and P.~J. Antsaklis.
\newblock Control design for switched systems using passivity indices.
\newblock In {\em Amer. Control Conf.}, pages 2499--2504, 2010.

\bibitem{megretski1993necessary}
A.~Megretski.
\newblock Necessary and sufficient conditions of stability: A multiloop
  generalization of the circle criterion.
\newblock {\em IEEE Trans. Autom. Control}, 38(5):753--756, 1993.

\bibitem{megretski1997system}
A.~Megretski and A.~Rantzer.
\newblock System analysis via integral quadratic constraints.
\newblock {\em IEEE Trans. Autom. Control}, 42(6):819--830, 1997.

\bibitem{megretski_treil}
A.~Megretski and S.~Treil.
\newblock Power distribution inequalities in optimization and robustness of
  uncertain systems.
\newblock {\em J. Math. Syst., Estimation Contr.}, 3(3):301–319, 1993.

\bibitem{pfifer2015integral}
H.~Pfifer and P.~Seiler.
\newblock Integral quadratic constraints for delayed nonlinear and
  parameter-varying systems.
\newblock {\em Automatica}, 56:36--43, 2015.

\bibitem{safonov1980stability}
M.~G. Safonov.
\newblock {\em Stability and robustness of multivariable feedback systems}.
\newblock MIT press, 1980.

\bibitem{seiler2014stability}
P.~Seiler.
\newblock Stability analysis with dissipation inequalities and integral
  quadratic constraints.
\newblock {\em IEEE Trans. Autom. Control}, 60(6):1704--1709, 2014.

\bibitem{shamma1991necessity}
J.~S. Shamma.
\newblock The necessity of the small-gain theorem for time-varying and
  nonlinear systems.
\newblock {\em IEEE Trans. Autom. Control}, 36(10):1138--1147, 1991.

\bibitem{teel1996graphs}
A.~R. Teel.
\newblock On graphs, conic relations, and input-output stability of nonlinear
  feedback systems.
\newblock {\em IEEE Trans. Autom. Control}, 41(5):702--709, 1996.

\bibitem{teel1996input}
A.~R. Teel, T.~T. Georgiou, L.~Praly, and E.~D. Sontag.
\newblock Input-output stability.
\newblock In {\em The control handbook, second edition: Control system advanced
  methods}, chapter~44. CRC Press, 2011.

\bibitem{Schaft2017L2}
A.~J. van~der Schaft.
\newblock {\em $L_2$-Gain and passivity techniques in nonlinear control, Third
  edition}.
\newblock Springer, 2017.

\bibitem{veenman2016robust}
J.~Veenman, C.~W. Scherer, and H.~K{\"o}ro{\u{g}}lu.
\newblock Robust stability and performance analysis based on integral quadratic
  constraints.
\newblock {\em European Journal of Control}, 31:1--32, 2016.

\bibitem{vidyasagar2002nonlinear}
M.~Vidyasagar.
\newblock {\em Nonlinear Systems Analysis}.
\newblock Society for Industrial and Applied Mathematics, 2nd edition, 2002.

\bibitem{willems1972dissipative}
J.~C. Willems.
\newblock Dissipative dynamical systems---{Part II}: Linear systems with
  quadratic supply rates.
\newblock {\em Archive for rational mechanics and analysis}, 45(5):352--393,
  1972.

\bibitem{yakubovich1997s}
V.~Yakubovich.
\newblock S-procedure in nonlinear control theory.
\newblock {\em Vestnick Leningrad Univ. Math.}, 4:73--93, 1997.

\bibitem{Zames_inputoutput1966_part1}
G.~Zames.
\newblock On the input-output stability of time-varying nonlinear feedback
  systems---{Part I}: Conditions derived using concepts of loop gain, conicity,
  and positivity.
\newblock {\em IEEE Trans. Autom. Control}, 11(2):228--238, 1966.

\bibitem{Zames_inputoutput1966_part2}
G.~Zames.
\newblock On the input-output stability of time-varying nonlinear feedback
  systems---{Part II}: Conditions involving circles in the frequency plane and
  sector nonlinearities.
\newblock {\em IEEE Trans. Autom. Control}, 11(3):465--476, 1966.

\bibitem{zhou1996robust}
K.~Zhou, J.~C. Doyle, and K.~Glover.
\newblock {\em Robust and optimal control}.
\newblock Prentice Hall, Upper Saddle River, N.J, 1996.

\end{thebibliography}

\end{document}